\definecolor{darkgreen}{rgb}{0,0.45,0}
\newtheorem{theorem}{Theorem}[section]
\newtheorem{proposition}[theorem]{Proposition}
\newtheorem{corollary}[theorem]{Corollary}
\newtheorem{lemma}[theorem]{Lemma}
\theoremstyle{definition}
\newtheorem{definition}[theorem]{Definition}
\newcommand{\htpy}{\sim}
\newcommand{\const}{\mathrm{const}}
\newcommand{\trunc}[2]{\| #2\|_{#1}}
\newcommand{\fib}{\mathrm{fib}}
\newcommand{\ct}{
  \mathchoice%
  {\mathbin{\raisebox{0.5ex}{$\displaystyle\centerdot$}}}%
  {\mathbin{\raisebox{0.5ex}{$\centerdot$}}}%
  {\mathbin{\raisebox{0.25ex}{$\scriptstyle\,\centerdot\,$}}}%
  {\mathbin{\raisebox{0.1ex}{$\scriptscriptstyle\,\centerdot\,$}}}}
\newcommand{\ap}{\mathrm{ap}}
\newcommand{\prd}[1]{\prod\nolimits_{(#1)}}
\newcommand{\lam}[1]{\lambda\,#1.\,}
\newcommand{\sm}[1]{\sum\nolimits_{(#1)}}
\newcommand{\defeq}{:\equiv}
\newcommand{\UU}{\mathcal{U}}
\newcommand{\pointedprd}[1]{\prod\nolimits^\ast_{(#1)}}
\newcommand{\unit}{\mathbf{1}}
\newcommand{\total}{\mathrm{total}}
\newcommand{\Decat}{\mathrm{Decat}}
\newcommand{\loopspace}{\Omega}
\newcommand{\gap}{\mathrm{gap}}
\newcommand{\blank}{\mathord{\hspace{1pt}\text{--}\hspace{1pt}}}
\newcommand{\im}{\mathrm{im}}
\newcommand{\pin}{\pi^{(n)}}
\newcommand{\pinf}{\pi^{(\infty)}}
\newcommand{\defemph}[1]{\textbf{#1}}
\title{The long exact sequence of homotopy $n$-groups}
\author{Ulrik Buchholtz}
\address{Technische Universit{\"a}t Darmstadt, Fachbereich Mathematik,
  Schlossgartenstra\ss e~7, 64289 Darmstadt, Germany}
\email{buchholtz@mathematik.tu-darmstadt.de}
\author{Egbert Rijke}
\address{University of Ljubljana, Fakulteta za matematiko in fiziko,
  Jadranska 19, 1000 Ljubljana, Slovenia}
\email{egbert.rijke@fmf.uni-lj.si}
\date{\today}
\begin{document}

\begin{abstract}
  Working in homotopy type theory,
  we introduce the notion of $n$-exactness for a short sequence $F\to E\to B$ of pointed types, and show that any fiber sequence $F\hookrightarrow E \twoheadrightarrow B$ of arbitrary types induces a short sequence
  \begin{equation*}
    \begin{tikzcd}
      \trunc{n-1}{F} \arrow[r] & \trunc{n-1}{E} \arrow[r] & \trunc{n-1}{B}
    \end{tikzcd}
  \end{equation*}
  that is $n$-exact at $\trunc{n-1}{E}$.
  We explain how the indexing makes sense when
  interpreted in terms of $n$-groups, and
  we compare our definition to the existing definitions of an exact sequence of $n$-groups for $n=1,2$.
  As the main application,
  we obtain the long $n$-exact sequence of homotopy $n$-groups of a fiber sequence.
\end{abstract}

\maketitle

\tableofcontents

\section{Introduction}

Homotopy type theory~\parencite{hottbook} is not only a foundational system (\emph{univalent foundations});
it also allows us to reason synthetically about $\infty$-groupoids (\emph{synthetic homotopy theory}). By viewing higher groups in terms of certain pointed $\infty$-groupoids as laid out by~\cite{BDR}, it also allows us to do \emph{synthetic higher group theory}.

From this point of view, a $1$-group $G$ is (represented by) a pointed connected $1$-type $BG$ (its classifying type). Loosely speaking, these are types that only have an interesting fundamental group, and no non-trivial higher homotopy groups.
Of course, it is not quite as simple if there are non-contractible $\infty$-connected types around, as can happen if Whitehead's principle fails.
Recall that homotopy type theory has models in $(\infty,1)$-toposes~\parencite{shulman2019},
and there are plenty such where Whitehead's principle fails.\footnote{An $(\infty,1)$-topos satisfying Whitehead's principle is also called \emph{hypercomplete}. Examples of non-hypercomplete $(\infty,1)$-toposes (in a classical metatheory)
include the $(\infty,1)$-topos of parametrized spectra (an object is hypercomplete if and only if the spectrum part is trivial) and the $(\infty,1)$-topos of continuous $\mathbb{Z}_p$-equivariant spaces, where we view the group $\mathbb{Z}_p$ of $p$-adic integers as a profinite group~\parencite[Warning~7.2.2.31]{LurieHTT}. The latter example is even boolean, hence satisfies the law of excluded middle internally.}
The underlying type of a $1$-group is therefore a set equipped with the usual structure of a group, so a group in the traditional sense of the word is a $1$-group.

Likewise, an $n$-group $G$ is represented by a connected $n$-type $BG$.
The principal example of an $n$-group is the fundamental $n$-group of a pointed type $X$, represented by the $n$-truncation of the connected component at the base point.

Many $n$-groups $G$ have further structure because they come with further deloopings of $BG$. The higher homotopy $n$-groups, $\pin_k(X)$, of a pointed type $X$ are examples of such $n$-groups with additional symmetries.
These capture the structure of $X$ in dimensions $k$ to $n+k-1$, inclusive, just like the usual higher homotopy $1$-groups, $\pi_k(X)$, capture the structure of $X$ at dimension $k$.
So whereas the usual homotopy groups discard any interactions between
different dimensions, the homotopy $n$-groups for $n>1$ retain some of that information, while still being more algebraically tractable than $X$ itself.

Our main result in this paper is \cref{maintheorem}, where we show that any fiber sequence $F\hookrightarrow E \twoheadrightarrow B$ induces a long exact sequence of homotopy $n$-groups.
The basic observation that enables this result is \cref{mainproposition}, in which we establish that the $n$-truncation operation -- although it is not left exact -- preserves $k$-cartesian squares for any $k<n$. A square
\begin{equation*}
  \begin{tikzcd}
    C\arrow[d] \arrow[r] & B \arrow[d] \\
    A \arrow[r] & X
  \end{tikzcd}
\end{equation*}
is called $k$-cartesian if the gap map $C\to A\times_X B$ is $k$-connected. In particular, any pullback square is $(n-1)$-cartesian, so the $n$-truncation of a pullback square is an $(n-1)$-cartesian square.

We work in homotopy type theory with a predicative hierarchy of
univalent universes closed under $n$-truncations.  Although we recall
the basic definitions, we refer to \parencite[Sec.~7]{hottbook}
for some results about $n$-types and the $n$-truncation modality, and
we also assume some familiarity with the basic theory of $k$-symmetric
$n$-groups as developed in~\cite{BDR}.\footnote{%
  The terminology is a bit in flux: In \emph{loc.cit.} the term was
  ``$k$-tuply groupal $(n-1)$-types, which is more in line with the
  classical notion of group-like $\mathbb{E}_k$-algebra in
  $(n-1)$-types, where $\mathbb{E}_k$ is the little $k$-cubes
  $\infty$-operad. Another proposed term is ``$(k-1)$-commutative
  $n$-group''.}

\subsection{Outline}
We start by establishing some basic definitions and notation in \cref{sec:notation}.
In \cref{sec:infty-exact} we define the notion of $\infty$-exactness and show that any fiber sequence induces a long $\infty$-exact sequence of homotopy $\infty$-groups.
In \cref{sec:kn-exact} we turn to $n$-exactness of $k$-symmetric $n$-groups and show that it is equivalent to $n$-exactness of the map on underlying $(n-1)$-types.
Our main results are in \cref{sec:mainresults}, and in \cref{sec:related} we point to some related work in the classical setting.

\section{Basic definitions and notation}\label{sec:notation}

Just as in \parencite{hottbook}, we write $x=y$ for the type of identifications of $x$ and $y$, provided that both $x$ and $y$ have a common type $X$. Sometimes we call identifications paths. We write
\begin{equation*}
  \ap_f:(x=y)\to (f(x)=f(y))
\end{equation*}
for the action of a function $f$ on paths. Path concatenation is written in diagrammatic order, i.e., we write $p \ct q$ for the concatenation of $p:x=y$ and $q:y=z$. The fiber of a map $f:A\to B$ at $b:B$ is defined to be the type
\begin{equation*}
  \fib_f(b)\defeq \sm{x:A}f(x)=b.
\end{equation*}
If $B$ is a pointed type with base point $y_0$,
we define the \defemph{kernel} of $f$ as the fiber of $f$ at $y_0$,
$\ker(f) \defeq \fib_f(y_0)$.
\begin{definition}
  A map $f:X\to Y$ is said to be an \defemph{$n$-truncation} if $Y$ is $n$-truncated, and for any family $P$ of $n$-truncated types over $Y$, the precomposition map
  \begin{equation*}
    \blank\circ f:\Big(\prd{y:Y}P(y)\Big)\to\Big(\prd{x:X}P(f(x))\Big)
  \end{equation*}
  is an equivalence. We assume that every type $X$ has an $n$-truncation
\begin{equation*}
  \eta:X\to\trunc{n}{X}.
\end{equation*}
\end{definition}

\begin{definition}
  Consider a pointed type $B$ with base point $x_0$
  and a family $E:B\to\UU$ equipped with a base point $y_0:E(x_0)$
  in the fiber over $x_0$.
  The type of \defemph{pointed sections} $\pointedprd{x:B}E(x)$ is the type of pairs $(f,p)$ consisting of a section $f:\prd{x:B}E(x)$ and an identification $p:f(x_0)=y_0$.

  Given two pointed sections $(f,p),(g,q):\pointedprd{x:B}E(x)$,
  we define the type of \defemph{pointed homotopies} as
  \begin{equation*}
    f\htpy_\ast g \defeq \pointedprd{x:B}f(x)=g(x),
  \end{equation*}
  where we equip the family of identifications given by $x\mapsto (f(x)=g(x))$
  with the base point
  \begin{equation*}
    p \ct q^{-1} : f(x_0)=g(x_0)
  \end{equation*}
  in the fiber over $x_0$.
\end{definition}
In the case of a non-dependent type family, we recover the notions of pointed maps
and pointed homotopies between these.
\begin{definition}
  A \defemph{$k$-symmetric $n$-group} $G$ is a pointed $(k-1)$-connected $(n+k-1)$-type $B^kG$. Its \defemph{underlying type} is the $k$-fold loop space $\Omega^kB^kG$. A \defemph{homomorphism} $f : G \to H$ of $k$-symmetric $n$-groups is represented by a pointed map
$B^kf : B^kG \to_* B^kH$.

We call $B^kG$ the \defemph{classifying type} of $G$.
\end{definition}

\begin{definition}\label{def:conn-cover}
  The \defemph{$m$-connected cover} $X\langle m\rangle$ of a pointed type $X$ is the kernel of $\eta : X \to \trunc mX$, equivalently,
  \begin{equation*}
    X\langle m\rangle \defeq \sm{x:X}\trunc{m-1}{x_0=x}.
  \end{equation*}
  Recall that $\eta:X\to\trunc mX$ is an $m$-connected map, so that $X\langle m\rangle$ is indeed an $m$-connected type.
\end{definition}

\begin{definition}
  The \defemph{$k$'th homotopy $n$-group} of a pointed type $X$ is represented by the $(n+k-1)$-truncation of the $(k-1)$-connected cover of $X$ at the base point, i.e., it is defined via the type
  \begin{equation*}
    B^k\pin_k(X) \defeq \trunc{n+k-1}{X\langle k-1\rangle}.
  \end{equation*}
  The underlying type of $\pin_k(X)$ is equivalent to $\trunc{n-1}{\loopspace^k X}$.
\end{definition}
Thus we see that $B^k\pin_k(X)$ fits in the fiber sequence
\begin{equation*}
  \begin{tikzcd}
    B^k\pin_k(X)\arrow[r,hook] & \trunc{n+k-1}{X} \arrow[r,->>] & \trunc{k-1}{X},
  \end{tikzcd}
\end{equation*}
Note also that in the case $k=0$ we just recover the $(n-1)$-truncation of $X$.
The observation that $B^k\pin_k(X)$ is the kernel of $\trunc{n+k-1}{X}\to\trunc{k-1}{X}$ is a generalization of the well-known fiber sequence
\begin{equation*}
  \begin{tikzcd}
    K(\pi_k(X),k) \arrow[r,hook] & \trunc{k}{X} \arrow[r,->>] & \trunc{k-1}{X}
  \end{tikzcd}
\end{equation*}
in which the fiber is the $k$'th Eilenberg-Mac Lane space of the $k$'th homotopy group of $X$ \parencite{LF}.

We can also set $n\equiv\infty$ in these definitions:
\begin{definition}
  A \defemph{$k$-symmetric $\infty$-group} $G$ is a pointed $(k-1)$-connected type $B^kG$. Its \defemph{underlying type} is the $k$-fold loop space $\Omega^kB^kG$. The \defemph{$k$'th homotopy $\infty$-group} of a pointed type $X$ is represented by $(k-1)$-connected cover of $X$ at the base point
  \begin{equation*}
    B^k\pinf_k(X) \defeq X\langle k-1\rangle,
  \end{equation*}
  so the underlying type is equivalent to $\loopspace^k X$.
\end{definition}

\section{The long \texorpdfstring{$\infty$}{∞}-exact sequence of a fiber sequence}\label{sec:infty-exact}

\begin{definition}
  A \defemph{short sequence} (or \defemph{complex}) consists of pointed types $B$, $E$, and $F$ with base points $x_0:B$, $z_0:E$ and $y_0:F$, respectively, equipped with pointed maps
  \begin{equation*}
    \begin{tikzcd}
      F \arrow[r,"i"] & E \arrow[r,"p"] & B
    \end{tikzcd}
  \end{equation*}
  and a pointed homotopy $H:p\circ_\ast i \htpy_\ast \const_{x_0}$. This homotopy witnesses that the square
  \begin{equation*}
    \begin{tikzcd}[column sep=large]
      F \arrow[r,"i"] \arrow[d,swap,"\const_\ast"] & E \arrow[d,"p"] \\
      \unit \arrow[r,swap,"\const_{x_0}"] & B
    \end{tikzcd}
  \end{equation*}
  commutes. A short sequence is said to be a \defemph{fiber sequence} if the above square is a pullback square.
\end{definition}

\begin{definition}\label{def:infty-exact}
  A short sequence
  \begin{equation*}
    \begin{tikzcd}
      F \arrow[r,"i"] & E \arrow[r,"p"] & B
    \end{tikzcd}
  \end{equation*}
  is said to be \defemph{$\infty$-exact} if the family of maps
  \begin{equation*}
    \alpha : \prd{z:E} \fib_i(z)\to (p(z)=x_0)
  \end{equation*}
  given by $\alpha(z,(y,q))= \ap_p(q)^{-1} \ct H(y)$ is a family of equivalences.
\end{definition}

\begin{proposition}
  A short sequence is $\infty$-exact if and only if it is a fiber sequence.
\end{proposition}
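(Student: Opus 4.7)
The plan is to reduce both conditions to the single statement that one particular map $F \to \fib_p(x_0)$ is an equivalence, and then conclude by transitivity. On the fiber-sequence side, the pullback $\unit \times_B E$ is canonically equivalent to $\fib_p(x_0) \defeq \sm{z:E}(p(z)=x_0)$ (the projection from $\unit$ is absorbed and the path direction inverted), and under this equivalence the gap map of the square becomes $\gap : F \to \fib_p(x_0)$ with $\gap(y) = (i(y),H(y))$. Thus the square is a pullback iff this map is an equivalence.

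On the $\infty$-exactness side, I would invoke the standard principle that a fibrewise map between two families over the same base is a fibrewise equivalence iff its induced map on total spaces is an equivalence. Applied to $\alpha$, this says that $\alpha$ is a family of equivalences iff
\[
  \total(\alpha) \colon \sm{z:E}\fib_i(z) \to \sm{z:E}(p(z)=x_0)
\]
is an equivalence. The codomain is literally $\fib_p(x_0)$, and the domain is canonically equivalent to $F$ via the standard equivalence $y \mapsto (i(y),(y,\refl_{i(y)}))$ (with inverse the first projection composed with the first projection out of $\fib_i$).

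It remains to show that the composite $F \xrightarrow{\sim} \sm{z:E}\fib_i(z) \xrightarrow{\total(\alpha)} \fib_p(x_0)$ agrees with $\gap$. This reduces to a one-line computation from the definition of $\alpha$:
\[
  y \longmapsto (i(y),(y,\refl)) \longmapsto \bigl(i(y),\ap_p(\refl)^{-1}\ct H(y)\bigr) = (i(y),H(y)),
\]
which matches $\gap$ on the nose. Combining this with the two equivalences above gives the biconditional.

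The main obstacle is purely bookkeeping: making sure the identification $\unit \times_B E \simeq \fib_p(x_0)$ is set up so that the induced gap map really is $y \mapsto (i(y),H(y))$ rather than some variant with $H(y)^{-1}$, and that this orientation is compatible with the direction chosen in the formula for $\alpha$. Once the conventions for path directions and basepoints are fixed consistently, every step is either formal (pullback rearrangement, total-equivalence principle) or a direct path-algebra simplification.
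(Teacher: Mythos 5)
Your proposal is correct and follows essentially the same route as the paper: the paper's proof also places the gap map and $\total(\alpha)$ in a commuting square whose vertical maps are the equivalences $F\simeq\sm{z:E}\fib_i(z)$ and $\unit\times_B E\simeq\sm{z:E}(p(z)=x_0)$, and then invokes the fibrewise-equivalence principle. Your explicit check that the composite sends $y\mapsto(i(y),H(y))$ is just the commutativity of that square, which the paper states without computation.
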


\begin{proof}
  First we note that we have a commuting square
  \begin{equation*}
    \begin{tikzcd}[column sep=huge]
      F \arrow[r,"\gap"] \arrow[d,swap,"\simeq"] & 1\times_B E \arrow[d,"\simeq"] \\
      \sm{z:E}\fib_i(z) \arrow[r,swap,"\total(\alpha)"] & \sm{z:E}p(z)=x_0,
    \end{tikzcd}
  \end{equation*}
  where the gap map at the top sends $y:F$ to the triple $(*,i(y),H(y))$.
  The two vertical maps in this square are equivalences. Thus we see that the gap map is an equivalence if and only if $\total(\alpha)$ is an equivalence, which is the case if and only if each $\alpha_z:\fib_i(z)\to (p(z)=x_0)$ is an equivalence.
\end{proof}

The following corollary is of course a well-known fact.\footnote{It was formalized already in Voevodsky's first \emph{UniMath} formalization, \emph{Part A}, ca.~2010--11~\parencite{UniMath}.}

\begin{corollary}
  For any fiber sequence $F\hookrightarrow E \twoheadrightarrow B$ we obtain a long $\infty$-exact sequence
  \begin{equation*}
    \begin{tikzcd}
      \cdots \arrow[r] & \loopspace F \arrow[r] & \loopspace E \arrow[r] & \loopspace B \arrow[r] & F \arrow[r] & E \arrow[r] & B.
    \end{tikzcd}
  \end{equation*}
\end{corollary}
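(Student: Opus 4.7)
The strategy is to extend the given fiber sequence to the left by iterated fibers and apply the preceding proposition at each three-term window. Concretely, I would start with the fiber sequence $F \xrightarrow{i} E \xrightarrow{p} B$ and form the next fiber inclusion $\fib_i(z_0) \to F \to E$. By the proposition this short sequence is automatically $\infty$-exact, so it remains only to identify the new fiber with $\loopspace B$ and recognize the induced map $\loopspace B \to F$ as the one expected in the long sequence.

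For this identification I would compute
\begin{equation*}
  \fib_i(z_0) \;\equiv\; \sm{y:F} i(y)=z_0 \;\simeq\; \sm{(z,q):\sm{z:E}p(z)=x_0} z=z_0,
\end{equation*}
using the presentation $F \simeq \fib_p(x_0)$ arising from the original fiber square. Reassociating the sigma, contracting the singleton $\sum_{z:E} z=z_0$, and finally composing with the base point identification $H(y_0):p(z_0)=x_0$ gives an equivalence $\fib_i(z_0) \simeq (x_0=x_0) \equiv \loopspace B$. Unwinding definitions shows that under this equivalence the inclusion $\fib_i(z_0) \to F$ corresponds to the standard ``connecting'' map $\loopspace B \to F$ sending $\ell$ to the point of $F$ represented by $(z_0, \ell^{-1} \ct H(y_0))$ (up to the usual sign convention).

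Having established that $\loopspace B \to F \to E$ is itself a fiber sequence, I would iterate: the same argument applied to this new fiber sequence produces a fiber sequence $\loopspace E \to \loopspace B \to F$, then $\loopspace F \to \loopspace E \to \loopspace B$, and so on indefinitely, each iteration exchanging the right-most slot's base for the loop space of the previous base. Splicing these three-term fiber sequences together yields the displayed long sequence, and $\infty$-exactness at every interior node is precisely the preceding proposition applied to the corresponding three-term window.

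\textbf{Main obstacle.} The routine part is the iteration; the delicate part is the first step, namely calculating $\fib_i(z_0)$ with correct base points and verifying that the resulting map $\loopspace B \to F$ is the expected one. Keeping track of the pointed homotopy $H$, the choice of base point in the fiber, and path orientations throughout the chain of equivalences is the only real bookkeeping; once this is done cleanly the first time, each subsequent stage is formally identical.
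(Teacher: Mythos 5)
Your proposal is correct and is exactly the standard argument the paper has in mind: the paper offers no proof here (it calls the corollary a well-known fact), and the intended route is precisely your iteration of fibers, identifying $\fib_i(z_0)\simeq\loopspace B$ via contracting the singleton and the pointing data, and then invoking the preceding proposition (fiber sequence $\Leftrightarrow$ $\infty$-exact) at each three-term window. No gaps beyond the bookkeeping of base points and path orientations that you already flag.
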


We can reinterpret this as the sequence
\begin{equation*}
  \begin{tikzcd}[column sep=tiny]
    \cdots \ar[r] & \pinf_k(F) \ar[r] & \pinf_k(E)\ar[r] & \pinf_k(B) \ar[r] & \cdots\ar[r]
    & \pinf_0(F) \ar[r] & \pinf_0(E)\ar[r] & \pinf_0(B),
  \end{tikzcd}
\end{equation*}
where the maps into a $k$-symmetric $\infty$-group are homomorphisms of $k$-symmetric $\infty$-groups (i.e., pointed maps of the classifying types).
This motivates the following definitions and subsequent observation.

\begin{definition}
  A \defemph{short sequence} (or \defemph{complex}) of $k$-symmetric $\infty$-groups consists of three $k$-symmetric $\infty$-groups $K,G,H$ and homomorphisms
  \begin{equation*}
    \begin{tikzcd}
      K \arrow[r,"\psi"] & G \arrow[r,"\varphi"] & H,
    \end{tikzcd}
  \end{equation*}
  with an identification of $\varphi\circ\psi$ with the trivial homomorphism from $K$ to $H$ \emph{as homomorphisms}. By definition, this means we have a short sequence
  \begin{equation*}
    \begin{tikzcd}
      B^kK \arrow[r,"B^k\psi"] & B^kG \arrow[r,"B^k\varphi"] & B^kH,
    \end{tikzcd}
  \end{equation*}
  of classifying types.
\end{definition}

\begin{definition}
  Given a homomorphism of $k$-symmetric $\infty$-groups $\varphi : G \to H$, we define its \defemph{kernel}, $\ker(\varphi)$, via the classifying type $B^k\ker(\varphi) \defeq \ker(B^k\varphi)\langle k-1 \rangle$,
  that is, the $(k-1)$-connected cover of the pointed kernel at the level of classifying types, $\ker(B^k\varphi)$.
\end{definition}
At the level of underlying types, we then have $\loopspace^kB^k\ker(\varphi) \simeq \loopspace^k\ker(B^k\varphi) \simeq \ker(\loopspace^kB^k\varphi)$,
where the first equivalence is an instance of the equivalence $\Omega^k(X\langle k-1\rangle \simeq \Omega^k X$ and the second follows by iterated application of the equivalence $\loopspace(\ker(f)) \simeq \ker(\loopspace(f))$ for any pointed map $f$.
That is, the underlying type of the kernel is the kernel of the map of underlying types.
\begin{definition}
  A short sequence of $k$-symmetric $\infty$-groups $K \xrightarrow{\psi}{} G \xrightarrow{\varphi}{} H$ is \defemph{$\infty$-exact} if the induced homomorphism $K \to \ker(\varphi)$, obtained as the unique lift in the commutative square
  \begin{equation*}
    \begin{tikzcd}
      \unit \ar[r] \ar[d] & B^k\ker(\varphi) \ar[d] \\
      B^kK \ar[r]\ar[ur,dashed] & B^kG
    \end{tikzcd}
  \end{equation*}
  where the left map is $(k-2)$-connected and the right map is $(k-2)$-truncated, is an equivalence.
\end{definition}

The following proposition is the higher analogue of the fact that a group homomorphism is an isomorphism if and only if its underlying map is a bijection.

\begin{proposition}
  A homomorphism of $k$-symmetric $\infty$-groups is an equivalence if and only if the map of underlying types is an equivalence.
\end{proposition}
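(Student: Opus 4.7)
The forward implication is routine: $\loopspace^k$ preserves equivalences of pointed types, so if $B^k\varphi$ is an equivalence then so is the underlying map $\loopspace^k B^k\varphi$.

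For the converse I plan to prove the slightly more general claim, by induction on $k \geq 0$, that any pointed map $f : X \to_* Y$ between pointed $(k-1)$-connected types with $\loopspace^k f$ an equivalence is itself an equivalence; the proposition then follows by specialising to $f \defeq B^k\varphi$. The base case $k = 0$ is immediate since $\loopspace^0 f = f$. In the inductive step $k \geq 1$, I first apply the induction hypothesis to $\loopspace f : \loopspace X \to_* \loopspace Y$, whose source and target are $(k-2)$-connected and which satisfies $\loopspace^{k-1}(\loopspace f) = \loopspace^k f$; this produces an equivalence $\loopspace f$. I then invoke a separate direct argument, which says that a pointed map between pointed $0$-connected types with $\loopspace f$ an equivalence is itself an equivalence. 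Since a $(k-1)$-connected type is in particular $0$-connected when $k \geq 1$, this concludes the step.

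For the direct argument, consider the fiber $F \defeq \sm{x:X}f(x) = y_0$, pointed by $(x_0, \refl_{y_0})$. The standard characterisation of identity types in $\Sigma$-types gives, for any $(x, q) : F$, an equivalence
\begin{equation*}
  \bigl((x_0, \refl_{y_0}) = (x, q)\bigr) \simeq \fib_{\ap_f}\bigl(q^{-1}\bigr),
\end{equation*}
where $\ap_f$ is viewed as the map $(x_0 = x) \to (y_0 = f(x))$. Being an equivalence is a proposition, so by $0$-connectedness of $X$ one may use the elimination principle for propositional truncation to assume a concrete path $r : x_0 = x$; the obvious square relating pre- and post-composition by $r$ and $\ap_f(r)$ then identifies $\ap_f$ (at $(x_0,x)$) with $\loopspace f$, which is an equivalence by hypothesis. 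Hence $\fib_{\ap_f}(q^{-1})$ is contractible, so the identity type $(x_0, \refl_{y_0}) = (x, q)$ is contractible, and $F$ is contractible. Finally, $Y$ being $0$-connected lets one propagate the contractibility of $\fib_f(y_0) = F$ to every fiber of $f$ by the same propositional-truncation trick, so $f$ is an equivalence.

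The main obstacle I anticipate is the careful juggling of propositional truncation in the direct argument: at two points one needs the goal (contractibility of an identity type, respectively contractibility of a fiber) to be a proposition in order to conclude it from a merely-given path. Everything else -- the forward direction, the induction on $k$, and the characterisation of identity types in $\Sigma$-types -- is formal.
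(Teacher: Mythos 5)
Your proof is correct and takes essentially the same route as the paper: induction on $k$ reducing everything to the fact that a pointed map of $0$-connected types is an equivalence if and only if its loop map is, which the paper simply cites while you supply a correct fiberwise proof of it (your only cosmetic slip is pointing the fiber by $(x_0,\refl_{y_0})$, which presumes the pointing path of $f$ is reflexivity; use $(x_0,p)$ for the given pointing path $p:f(x_0)=y_0$, or reduce to that case).
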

\begin{proof}
  This follows by induction, based on the fact that a pointed map of connected types $f : X \to Y$ is an equivalence if and only if $\loopspace f : \loopspace X \to \loopspace Y$ is~\parencite[Cor.~8.8.2]{hottbook}.
\end{proof}

\begin{corollary}
  A short sequence of $k$-symmetric $\infty$-groups is $\infty$-exact if and only if the short sequence of underlying types is $\infty$-exact.
\end{corollary}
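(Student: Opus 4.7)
The plan is to reduce both sides of the stated equivalence to a single condition, namely that one and the same gap map is an equivalence, using the preceding proposition together with the characterization of $\infty$-exactness established at the beginning of this section.

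By definition, $\infty$-exactness of the sequence $K \xrightarrow{\psi} G \xrightarrow{\varphi} H$ of $k$-symmetric $\infty$-groups asserts that the induced homomorphism $K \to \ker(\varphi)$ is an equivalence of $k$-symmetric $\infty$-groups. By the preceding proposition, this holds if and only if the map on underlying types $\loopspace^k B^kK \to \loopspace^k B^k\ker(\varphi)$ is an equivalence. I would then identify this map, up to equivalence, with the gap map of the underlying short sequence into the fiber of $\loopspace^k B^k\varphi$. Indeed, as noted right after the definition of $\ker(\varphi)$, we have equivalences
\begin{equation*}
  \loopspace^k B^k\ker(\varphi) \simeq \loopspace^k \fib(B^k\varphi) \simeq \fib(\loopspace^k B^k\varphi),
\end{equation*}
because $\loopspace^k$ commutes with fibers and is insensitive to taking $(k-1)$-connected covers at the base point. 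Under these equivalences, the underlying map of $K \to \ker(\varphi)$ corresponds to the gap map of the underlying short sequence, by the universal property of the fiber. By the first proposition of this section, $\infty$-exactness of the underlying short sequence is equivalent to this gap map being an equivalence, and combining the two chains of equivalences closes both directions.

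The one non-routine step, and thus the main obstacle, is the compatibility claim: that $\loopspace^k$ applied to the dashed lift $B^kK \to B^k\ker(\varphi)$ really agrees, under the equivalence $\loopspace^k B^k\ker(\varphi) \simeq \fib(\loopspace^k B^k\varphi)$, with the gap map of underlying types. I expect this to follow by unfolding the orthogonality-based construction of the lift (between $(k-2)$-connected and $(k-2)$-truncated maps), together with functoriality of $\loopspace^k$ and the universal property of the fiber, which together force the two maps into $\fib(\loopspace^k B^k\varphi)$ sitting over $\loopspace^k B^kG$ to coincide.
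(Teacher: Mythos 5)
Your argument is correct and is exactly the one the paper intends (the corollary is stated without proof as an immediate consequence of the preceding proposition together with the earlier identification of $\infty$-exact short sequences with fiber sequences): both conditions reduce to the same gap map into $\fib(\loopspace^k B^k\varphi)$ being an equivalence. The compatibility step you flag is indeed the only thing to check, and your sketch of it via the universal property of the fiber and functoriality of $\loopspace^k$ is the right way to discharge it.
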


\section{Exactness of complexes of
  \texorpdfstring{$k$}{k}-symmetric
  \texorpdfstring{$n$}{n}-groups}\label{sec:kn-exact}

Now we have laid the groundwork to consider the case of $n$-groups of finite $n$.

\begin{definition}
  A \defemph{short sequence} (or \defemph{complex}) of $k$-symmetric $n$-groups is
  a short sequence of three $k$-symmetric $\infty$-groups that happen to be $n$-groups.
\end{definition}

But beware that we have a different notion of exactness in this case, cf.~\cref{def:n-exact} below.

\begin{proposition}
  Given a homomorphism of $k$-symmetric $n$-groups $\varphi : G \to H$, the kernel $\ker(\varphi)$ is again an $n$-group.
\end{proposition}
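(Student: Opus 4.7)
The plan is to verify the two defining conditions of a $k$-symmetric $n$-group for $B^k\ker(\varphi) \defeq \fib(B^k\varphi)\langle k-1\rangle$: it must be pointed, $(k-1)$-connected, and $(n+k-1)$-truncated. The pointing and $(k-1)$-connectedness are immediate from the definition of the $(k-1)$-connected cover, since every $m$-connected cover is pointed and $m$-connected (as noted in the paper right after the definition of $X\langle m\rangle$). So the entire content of the proof is to check that $\fib(B^k\varphi)\langle k-1\rangle$ is $(n+k-1)$-truncated.

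For this I would argue in two steps. First, $\fib(B^k\varphi)$ is $(n+k-1)$-truncated because $B^k\varphi : B^kG \to B^kH$ is a map between $(n+k-1)$-types (by hypothesis that $G$ and $H$ are $k$-symmetric $n$-groups), and the fiber of a map between $m$-truncated types is again $m$-truncated, since fibers are built from identity types in the codomain and $\Sigma$'s over the domain, both of which preserve $m$-truncatedness. Second, the $(k-1)$-connected cover of an $(n+k-1)$-truncated type is again $(n+k-1)$-truncated: using the presentation
\begin{equation*}
  \fib(B^k\varphi)\langle k-1\rangle \;\equiv\; \sm{x:\fib(B^k\varphi)} \trunc{k-2}{\mathsf{pt}=x},
\end{equation*}
the base is $(n+k-1)$-truncated by the previous step, and each fiber $\trunc{k-2}{\mathsf{pt}=x}$ is $(k-2)$-truncated, hence $(n+k-1)$-truncated whenever $k-2 \le n+k-1$, which holds for all $n \ge -1$. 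A $\Sigma$-type of an $(n+k-1)$-truncated family over an $(n+k-1)$-truncated base is itself $(n+k-1)$-truncated.

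There is no real obstacle here; the statement is essentially bookkeeping about how truncation levels propagate through fibers and connected covers. The only thing to be careful about is the edge case of small $k$ (where $k-2$ may be negative), but in homotopy type theory the truncation levels extend to $-1$ and $-2$ and the inequality $k-2 \le n+k-1$ remains valid, so the argument goes through uniformly.
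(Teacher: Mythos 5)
Your proof is correct and follows the same route as the paper's: the paper likewise observes that $\fib(B^k\varphi)$ is an $(n+k-1)$-type and that taking the $(k-1)$-connected cover preserves $(n+k-1)$-types, and you simply supply the (straightforward) details of these two steps via the $\Sigma$-type presentation of the connected cover.
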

\begin{proof}
  This follows since $\ker(B^k\varphi)$ is an $(n+k-1)$-type, and taking the $(k-1)$-connected cover preserves $(n+k-1)$-types.
\end{proof}

\begin{definition}\label{def:n-exact}
  A short sequence of $k$-symmetric $n$-groups $K \xrightarrow{\psi}{} G \xrightarrow{\varphi}{} H$ is \defemph{$n$-exact} if and only if the induced map of underlying $(n-1)$-types $K \to \ker(\varphi)$ is $(n-2)$-connected.
\end{definition}

In contrast to the $\infty$-case, we also have a useful notion of image for finite $n$:
\begin{definition}
  Given a homomorphism of $k$-symmetric $n$-groups $\varphi : G \to H$, we define the \defemph{$n$-image} $\im^n(\varphi)$ via the classifying type $B^k\im^n(\varphi)$ as it appears in the $(n+k-2)$-image factorization of $B^k\varphi$~\parencite[Def.~7.6.3]{hottbook}:
  \begin{equation}\label{eq:n-im-fact}
    \begin{tikzcd}
      B^kG \ar[r] & B^k\im^n(\varphi)\ar[r] & B^kH,
    \end{tikzcd}
  \end{equation}
  viz., $B^k\im^n(\varphi)\defeq \sm{t:B^kH}\trunc{n+k-2}{\fib_{B^k\varphi}(t)}$.
\end{definition}
When $n$ is fixed and clear from the context, we shall leave it out from the notation, and just write $\im(\varphi)$ for the ($n$-)image.
We do not mention $k$ in the notation, thanks to the following.
\begin{proposition}
  Given a homomorphism of $k$-symmetric $n$-groups $\varphi : G \to H$ with $k>0$,
  we can regard $\varphi$ as a homomorphism of underlying $(k-1)$-symmetric $n$-groups.
  Then the universal property of the $(n+k-3)$-image factorization
  induces an equivalence $\Omega B^k\im^n(\varphi) \simeq B^{k-1}\im^n(\varphi)$.
\end{proposition}
\begin{proof}
  If we apply the loop space functor to \cref{eq:n-im-fact}
  we get a factorization of $\Omega B^k\varphi = B^{k-1}\varphi$ as
  an $(n+k-3)$-connected map followed by an $(n+k-3)$-truncated map.
  Thus we get the desired induced equivalence~\parencite[Thm.~7.6.6]{hottbook}.
\end{proof}
In particular, at the level of underlying $(n-1)$-types, the $n$-image
$\im^n(\varphi)$ is the usual $(n-2)$-image.  In the special case
$n=1$ of $1$-groups, we recover the usual image (i.e., $(-1)$-image)
at the level of underlying sets.\footnote{%
  This is the reason we write a superscript $n$ for the higher
  group-theoretical $n$-image: We have to subtract $2$ when we
  describe this as an $(n-2)$-image in the sense of the truncation
  modality orthogonal factorization system at the level of underlying
  $(n-1)$-types: $\im^n(\varphi) = \im_{n-2}(\varphi)$.}

\begin{proposition}
  A short sequence of $k$-symmetric $n$-groups $K \xrightarrow{\psi}{} G \xrightarrow{\varphi}{} H$ is
  $n$-exact if and only if the unique homomorphism $\im^n(\psi) \to \ker(\varphi)$ is an equivalence.
\end{proposition}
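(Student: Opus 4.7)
The plan is to realize $B^kK \to B^k\im(\psi) \to B^k\ker(\varphi)$ as the $(n+k-2)$-image factorization of the induced map $B^kK \to B^k\ker(\varphi)$ coming from $\psi$ and the null-homotopy of $\varphi\circ\psi$. (This lifted map exists because the null-homotopy lifts $B^k\psi$ through $\fib(B^k\varphi) \to B^kG$, and since $B^kK$ is $(k-1)$-connected, it lifts further through the $(k-1)$-connected cover $B^k\ker(\varphi)\to\fib(B^k\varphi)$.) Once this identification is in place, $B^k\im(\psi) \to B^k\ker(\varphi)$ is $(n+k-2)$-truncated by construction, so it is an equivalence iff it is $(n+k-2)$-connected; and by the two-out-of-three property of $(n+k-2)$-connected maps applied to $B^kK \to B^k\im(\psi) \to B^k\ker(\varphi)$ (whose first leg is $(n+k-2)$-connected), this is in turn equivalent to the composite $B^kK \to B^k\ker(\varphi)$ being $(n+k-2)$-connected.

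The identification above is enabled by showing that $B^k\ker(\varphi) \to B^kG$ is itself $(n+k-2)$-truncated: for then the composite $B^kK \to B^k\im(\psi) \to B^k\ker(\varphi) \to B^kG$ is a factorization of $B^kK \to B^kG$ into an $(n+k-2)$-connected map followed by a composition of two $(n+k-2)$-truncated maps, and by uniqueness of $(n+k-2)$-image factorizations it agrees with the given one $B^kK \to B^k\im(\psi) \to B^kG$. To check that $B^k\ker(\varphi) \to B^kG$ is $(n+k-2)$-truncated, decompose it as $\fib(B^k\varphi)\langle k-1\rangle \to \fib(B^k\varphi) \to B^kG$: the connected-cover projection has $(k-2)$-truncated fibers (truncated path spaces), and the fiber projection has fibers that are identity types of the $(n+k-1)$-truncated type $B^kH$, hence $(n+k-2)$-truncated. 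A composite of $(n+k-2)$-truncated maps is $(n+k-2)$-truncated.

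Finally I translate $(n+k-2)$-connectedness of $f : B^kK \to B^k\ker(\varphi)$ into $(n-2)$-connectedness of the underlying map $\loopspace^k f : K \to \ker(\varphi)$. Since $B^kK$ and $B^k\ker(\varphi)$ are both $(k-1)$-connected, $f$ is $(n+k-2)$-connected iff $\pi_j f$ is an isomorphism for $k \le j \le n+k-2$ and surjective for $j = n+k-1$; under the shift $\pi_i\loopspace^k = \pi_{i+k}$ this matches the analogous condition for $\loopspace^k f$ to be $(n-2)$-connected. The main obstacle is that $K$ and $\ker(\varphi)$ need not be $0$-connected, so $(n-2)$-connectedness of $\loopspace^k f$ is not a priori read off from homotopy groups; however, $K$ and $\ker(\varphi)$ are $k$-fold loop spaces, so translation by elements shows that all nonempty fibers of $\loopspace^k f$ are equivalent to $\loopspace^k\fib(f)$, and $(n-2)$-connectedness thus reduces to surjectivity of $\pi_0\loopspace^k f$ together with $(n-2)$-connectedness of $\loopspace^k\fib(f)$, which via the long exact sequence of $\fib(f) \to B^kK \to B^k\ker(\varphi)$ gives precisely the desired homotopy-group conditions on $f$.
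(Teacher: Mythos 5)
Your proposal is correct and takes essentially the same route as the paper's proof: it translates $n$-exactness of the underlying map $K \to \ker(\varphi)$ into $(n+k-2)$-connectedness of $B^kK \to B^k\ker(\varphi)$, and identifies the statement that $\im(\psi) \to \ker(\varphi)$ is an equivalence with the statement that the right map in the $(n+k-2)$-image factorization is an equivalence. The only difference is that you fill in details the paper leaves implicit -- notably that $B^k\ker(\varphi) \to B^kG$ is $(n+k-2)$-truncated, so that the image of $B^kK \to B^k\ker(\varphi)$ agrees with $B^k\im(\psi)$, and the looping argument relating the two connectivity conditions -- which is a welcome elaboration rather than a different argument.
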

\begin{proof}
  The map of underlying types $K \to \ker(\varphi)$ is $(n-2)$-connected if and only if the map
  $B^kK \to B^k\ker(\varphi)$ is $(n+k-2)$-connected, and this happens if and only if the right map in the $(n+k-2)$-image factorization is an equivalence.
\end{proof}

\section{The long \texorpdfstring{$n$}{n}-exact sequence of fiber sequences}\label{sec:mainresults}

Our deliberations in the previous section motivate the following definition.
\begin{definition}
  A short sequence $F\xrightarrow i{} E\xrightarrow p{} B$ of pointed types
  is \defemph{$n$-exact} if for each $z:E$, the map
  \begin{equation*}
    \alpha_z : \fib_i(z)\to (p(z)=x_0)
  \end{equation*}
  as in~\cref{def:infty-exact} is $(n-2)$-connected.
\end{definition}

\begin{definition}
  A commuting square
  \begin{equation*}
    \begin{tikzcd}
      C\arrow[d] \arrow[r] & B \arrow[d] \\
      A \arrow[r] & X
    \end{tikzcd}
  \end{equation*}
  is \defemph{$k$-cartesian} if its gap map $C \to A \times_X B$ is $k$-connected.
\end{definition}

\begin{lemma}\label{mainlemma}
  Consider a short sequence $F\xrightarrow i{} E\xrightarrow p{} B$. The following are equivalent:
  \begin{enumerate}[label=\textup(\arabic{*}\textup)]
  \item The short sequence is $n$-exact.
  \item The square
    \begin{equation*}
      \begin{tikzcd}
        F \arrow[r,"i"] \arrow[d] & E \arrow[d,"p"] \\
        \unit \arrow[r] & B
      \end{tikzcd}
    \end{equation*}
    is $(n-2)$-cartesian.
  \end{enumerate}
\end{lemma}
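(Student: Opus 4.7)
The plan is to reuse the same diagram that appeared in the proof of the $\infty$-exact proposition, but to track connectedness rather than equivalence. Concretely, we still have the commuting square
\begin{equation*}
  \begin{tikzcd}[column sep=huge]
    F \arrow[r,"\gap"] \arrow[d,swap,"\simeq"] & \unit\times_B E \arrow[d,"\simeq"] \\
    \sm{z:E}\fib_i(z) \arrow[r,swap,"\total(\alpha)"] & \sm{z:E}p(z)=x_0,
  \end{tikzcd}
\end{equation*}
whose two vertical maps are equivalences, where the left one identifies $F$ with the total space of the family of fibers of $i$, and the right one is the standard re-association of $\unit\times_B E$.

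Since connectedness is invariant under precomposition and postcomposition with equivalences, I would immediately conclude that the gap map is $(n-2)$-connected if and only if $\total(\alpha)$ is $(n-2)$-connected. This reduces the lemma to the statement that $\total(\alpha)$ is $(n-2)$-connected iff each component $\alpha_z$ is $(n-2)$-connected, which is precisely the fiberwise characterization of connectedness for maps between total spaces of type families (the analogue, for $(n-2)$-connected maps, of the fact already used in the $\infty$-case that an equivalence of total spaces over the same base corresponds to a fiberwise equivalence).

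I expect the proof to be essentially a one-line deduction from the $\infty$-exact proposition's diagram together with this fiberwise-connectedness lemma. The only potentially delicate point is to make sure the fiberwise-connectedness fact is available: the standard proof (which appears, e.g., in the \textcite{hottbook}) identifies the fiber of $\total(\alpha)$ at a point $(z,q)$ with the fiber of $\alpha_z$ at $q$, so $n$-connectedness of all fibers of $\total(\alpha)$ is equivalent to $n$-connectedness of all fibers of all $\alpha_z$. If this is not already cited in the paper, I would either invoke it as a standard fact or briefly sketch this fiber calculation before concluding.
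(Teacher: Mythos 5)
Your argument matches the paper's: the paper likewise identifies the fiber of $\total(\alpha)$ at $(z,q)$ with the fiber of $\alpha_z$ at $q$ to transfer $(n-2)$-connectedness fiberwise, with the identification of the gap map and $\total(\alpha)$ via the square from the $\infty$-exact case left implicit. Your write-up just spells out these same steps slightly more explicitly, so it is correct and essentially the same proof.
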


\begin{proof}
  Recall that the fiber of $\alpha_z$ at $q:p(z)=x_0$ is equivalent to the fiber of $\total(\alpha)$ at $(z,q):\fib_p(x_0)$~\parencite[Thm.~4.7.6]{hottbook}.
  Therefore it follows immediately that each $\alpha_z$ is $(n-2)$-connected if and only if $\total(\alpha)$ is $(n-2)$-connected.
\end{proof}

We now come to the key observation:
\begin{proposition}\label{mainproposition}
  The $n$-truncation modality preserves $k$-cartesian squares for any $k<n$.
\end{proposition}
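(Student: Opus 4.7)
The plan is to factor the gap map of the truncated square through the $n$-truncation of the original pullback. Let $g\colon C \to A\times_X B$ denote the original gap map, which is $k$-connected by hypothesis. The new gap map
$g' \colon \trunc{n}{C} \to \trunc{n}{A}\times_{\trunc{n}{X}}\trunc{n}{B}$
factors, by the universal property of $\trunc{n}{-}$, as
\[g' \;=\; \phi \circ \|g\|_n,\]
where $\|g\|_n\colon \trunc{n}{C}\to\trunc{n}{A\times_X B}$ is the $n$-truncation of $g$ and
$\phi\colon \trunc{n}{A\times_X B}\to \trunc{n}{A}\times_{\trunc{n}{X}}\trunc{n}{B}$
is the canonical comparison map. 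The factorization is well-defined because a pullback of $n$-truncated types over an $n$-truncated type is again $n$-truncated. Since composites of $k$-connected maps are $k$-connected, it suffices to show each factor separately is $k$-connected.

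The first factor is routine: $\|g\|_n$ is $k$-connected because $n$-truncation preserves $k$-connected maps for every $k\leq n$. This is an immediate consequence of the orthogonality characterization of $k$-connected maps against families of $k$-truncated types, together with the universal property of $n$-truncation.

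For the second factor I would prove the stronger claim that $\phi$ is $(n-1)$-connected; since $k<n$, this yields the required $k$-connectedness. I would analyze the fibers of $\phi$ directly. Since $(n-1)$-connectedness of a type is a proposition, truncation induction reduces the question to showing that for each $(a,b,p)\colon A\times_X B$ the fiber of $\phi$ at $(\eta a,\eta b,\eta p)$ is $(n-1)$-connected. That fiber is canonically pointed by $\eta(a,b,p)$, and, after applying the standard identification $\trunc{n-1}{x=y}\simeq(\eta x=\eta y)$ for paths in an $n$-truncation and commuting $\Sigma$ with $n$-truncation wherever possible, it is expressed as an iterated truncation that collapses — after one further $(n-1)$-truncation — to a contractible type.

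The main technical obstacle is this fiber computation for $\phi$: it requires careful bookkeeping of how $n$-truncation interacts with iterated $\Sigma$-types and identity types. An alternative and more conceptual route is to observe that the content of the proposition is precisely that the $n$-truncation reflective subuniverse preserves pullback squares up to an $(n-1)$-connected gap map; this can be extracted from the general theory of modalities via a descent argument. Either way, combining the two factors gives that $g'$ is $\min(k,n-1) = k$-connected, as required.
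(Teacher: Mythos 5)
Your overall strategy is sound and genuinely different in organization from the paper's: you factor the truncated gap map as $\phi\circ\trunc{n}{g}$ through $\trunc{n}{A\times_X B}$, using (a) that $n$-truncation preserves $k$-connected maps for $k\leq n$ (correct, and your orthogonality justification is fine), and (b) that the comparison map $\phi:\trunc{n}{A\times_X B}\to\trunc{n}{A}\times_{\trunc{n}{X}}\trunc{n}{B}$ is $(n-1)$-connected, whence the composite is $k$-connected since $k\leq n-1$. The paper instead compares the two gap maps directly along the units in a commuting square and uses the cancellation property of $k$-connected maps in a triangle, which only requires knowing that the map $A\times_X B\to\trunc{n}{A}\times_{\trunc{n}{X}}\trunc{n}{B}$ is $k$-connected; it gets this at once because that map is the induced map on total spaces of the $n$-connected units $\eta:A\to\trunc{n}{A}$, $\eta:B\to\trunc{n}{B}$ and the $(n-1)$-connected maps $\ap_\eta:(f(a)=g(b))\to(\eta f(a)=\eta g(b))$. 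The caveat is that your step (b), which you yourself flag as the main obstacle, is only sketched: the direct fiber analysis of $\phi$ via iterated truncation identities is exactly the fiddly bookkeeping you would need to carry out, and as written (``collapses after one further $(n-1)$-truncation to a contractible type'') it is not yet an argument. The economical way to discharge it is essentially the paper's computation plus one cancellation: the composite $A\times_X B\to\trunc{n}{A}\times_{\trunc{n}{X}}\trunc{n}{B}$ is $(n-1)$-connected by the total-space argument just described, it equals $\phi\circ\eta$ with $\eta:A\times_X B\to\trunc{n}{A\times_X B}$ $n$-connected, and cancelling $\eta$ gives that $\phi$ is $(n-1)$-connected. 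So your route is correct and yields the sharper statement about $\phi$ (a reusable fact about the left-exactness defect of $n$-truncation), but it does not avoid the central computation; the paper's triangle argument is slightly more direct because it never needs the precise connectivity of $\phi$, only $k$-connectedness of the comparison.
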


\begin{proof}
  Consider a $k$-cartesian square
  \begin{equation*}
    \begin{tikzcd}
      C \arrow[r] \arrow[d] & B \arrow[d] \\
      A \arrow[r] & X
    \end{tikzcd}
  \end{equation*}
  for some $k<n$. Our goal is to show that the square
  \begin{equation*}
    \begin{tikzcd}
      \trunc{n}{C} \arrow[r] \arrow[d] & \trunc{n}{B} \arrow[d] \\
      \trunc{n}{A} \arrow[r] & \trunc{n}{X}
    \end{tikzcd}
  \end{equation*}
  is again $k$-cartesian. To see this, consider the commuting square
  \begin{equation*}
    \begin{tikzcd}
      C \arrow[d,swap,"\eta"] \arrow[r,"\gap"] & A\times_X B \arrow[d,"{\lam{(a,b,p)}(\eta(a),\eta(b),\ap_{\eta}(p))}"] \\
      \trunc{n}{C} \arrow[r,swap,"\gap"] & \trunc{n}{A}\times_{\trunc{n}{X}}\trunc{n}{B}
    \end{tikzcd}
  \end{equation*}
  In this square, the top map is assumed to be $k$-connected. The left map is $n$-connected, so it is also $k$-connected. Recall that if, in a commuting triangle
  \begin{equation*}
    \begin{tikzcd}[column sep=small]
      \phantom{.} \arrow[rr] \arrow[dr] & & \phantom{.} \arrow[dl] \\
      & \phantom{.}
    \end{tikzcd}
  \end{equation*}
  the top map is $k$-connected, then the left map is $k$-connected if and only if the right map is~\parencite[Lem.~1.33]{modalities}. Therefore, it suffices to show that the right map in the above square is $k$-connected. This is indeed the case, since it is the induced map on total spaces of the two $n$-connected maps $\eta:A\to\trunc{n}{A}$ and $\eta:B\to\trunc{n}{B}$, and the $(n-1)$-connected map $\ap_\eta: (f(a)=g(b))\to (\eta(f(a))=\eta(g(b)))$, all of which are also $k$-connected.
\end{proof}

Our main theorem is now a simple consequence of the above results.

\begin{theorem}\label{maintheorem}
  Any fiber sequence $F\hookrightarrow E \twoheadrightarrow B$ induces an $n$-exact short sequence $\trunc{n-1}{F}\to\trunc{n-1}{E}\to\trunc{n-1}{B}$.
\end{theorem}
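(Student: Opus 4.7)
The plan is to obtain this immediately by chaining \cref{mainlemma} and \cref{mainproposition}. By \cref{mainlemma}, the sequence $\trunc{n-1}{F}\to\trunc{n-1}{E}\to\trunc{n-1}{B}$ is $n$-exact if and only if the square
\begin{equation*}
  \begin{tikzcd}
    \trunc{n-1}{F} \arrow[r] \arrow[d] & \trunc{n-1}{E} \arrow[d] \\
    \unit \arrow[r] & \trunc{n-1}{B}
  \end{tikzcd}
\end{equation*}
is $(n-2)$-cartesian, so the task reduces to verifying this cartesian condition.

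First I would observe that, since $F\hookrightarrow E\twoheadrightarrow B$ is a fiber sequence, the unpointed square with corners $F,E,\unit,B$ is a pullback, whence its gap map is an equivalence. An equivalence is $k$-connected for every $k$; in particular, the square is $(n-2)$-cartesian. Next I would apply \cref{mainproposition} at truncation level $n-1$ with cartesian level $k=n-2$; the hypothesis $k<n-1$ reads $n-2<n-1$ and is automatic. This yields that the $(n-1)$-truncation of the original square is again $(n-2)$-cartesian. Using $\trunc{n-1}{\unit}\simeq\unit$, the truncated square coincides with the one displayed above, and a final application of \cref{mainlemma} in the opposite direction gives the desired $n$-exactness.

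The main obstacle is really only index bookkeeping: keeping straight the translation between the truncation level $n-1$, the cartesian level $n-2$ inside \cref{mainlemma}, and the strict inequality $k<n$ in \cref{mainproposition}. Once these are lined up, no further work is required, and the theorem is a clean corollary of the two preceding results.
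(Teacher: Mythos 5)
Your proposal is correct and follows essentially the same route as the paper: apply \cref{mainproposition} (with truncation level $n-1$ and $k=n-2$) to the pullback square of the fiber sequence, then conclude via \cref{mainlemma}. The only addition is your explicit note that $\trunc{n-1}{\unit}\simeq\unit$, a detail the paper leaves implicit.
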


\begin{proof}
  Consider a fiber sequence $F\hookrightarrow E\twoheadrightarrow B$. Since any pullback square is in particular $(n-2)$-cartesian, it follows from \cref{mainproposition} that the square
  \begin{equation*}
    \begin{tikzcd}
      \trunc{n-1}{F} \arrow[r] \arrow[d] & \trunc{n-1}{E} \arrow[d] \\
      \unit \arrow[r] & \trunc{n-1}{B}
    \end{tikzcd}
  \end{equation*}
  is $(n-2)$-cartesian. By \cref{mainlemma} it now follows that the short sequence $\trunc{n}{F}\hookrightarrow\trunc{n}{E}\twoheadrightarrow\trunc{n}{B}$ is $n$-exact.
\end{proof}

As a corollary we obtain the long $n$-exact sequence of homotopy $n$-groups, obtained from a fiber sequence $F\hookrightarrow E\twoheadrightarrow B$.

\begin{corollary}
  For any fiber sequence $F\hookrightarrow E \twoheadrightarrow B$ we obtain a long $n$-exact sequence
  \begin{equation*}
    \begin{tikzcd}
      \cdots \ar[r] &
      \pin_k(E)
      \ar[r] \ar[d, phantom, ""{coordinate, name=Z}] &
      \pin_k(B) \ar[dll, rounded corners,
      to path={ -- ([xshift=8.5ex]\tikztostart.center)
        |- (Z) [near end]\tikztonodes
        -| ([xshift=-13ex]\tikztotarget.center) -- (\tikztotarget)}] \\
      \pin_{k-1}(F) \ar[r] &
      \pin_{k-1}(E) \ar[r] \ar[d, phantom, ""{coordinate, name=W}] &
      \cdots \ar[dll, rounded corners,
      to path={ -- ([xshift=8.5ex]\tikztostart.center)
        |- (W) [near end]\tikztonodes
        -| ([xshift=-13ex]\tikztotarget.center) -- (\tikztotarget)}] \\
      \pin_0(F) \ar[r] &
      \pin_0(E) \ar[r] &
      \pin_0(B)
    \end{tikzcd}
%    \begin{tikzcd}[column sep=small]
%      \cdots \ar[r] & \pin_k(F) \ar[r] & \pin_k(E)\ar[r] & \pin_k(B) \ar[r] & \cdots\ar[r]
%      & \pin_0(F) \ar[r] & \pin_0(E)\ar[r] & \pin_0(B)
%    \end{tikzcd}
  \end{equation*}
  of homotopy $n$-groups, where the morphisms are homomorphisms of $k$-symmetric $n$-groups whenever the codomain is a $k$-symmetric $n$-group.
\end{corollary}

As a further application we note:
\begin{corollary}\label{cor:loop-decat}
  Given a short $n$-exact sequence of $k$-symmetric $n$-groups $K \xrightarrow{\psi}{} G \xrightarrow{\varphi}{} H$, the resulting looped sequence $\loopspace K \to \loopspace G \to \loopspace H$ is a short $(n-1)$-exact sequence of $(k+1)$-symmetric $(n-1)$-groups, and the resulting decategorified sequence $\Decat(K) \to \Decat(G) \to \Decat(H)$ is a short $(n-1)$-exact sequence of $k$-symmetric $(n-1)$-groups.
\end{corollary}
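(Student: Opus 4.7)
The plan is to reduce both claims to \cref{mainlemma} together with standard interactions between looping, truncation, and connectivity. First, by the discussion preceding \cref{def:n-exact}, the hypothesis that $K \xrightarrow{\psi} G \xrightarrow{\varphi} H$ is $n$-exact as $k$-symmetric $n$-groups translates, on underlying types, into $n$-exactness of the pointed-type sequence $\loopspace^k B^k K \to \loopspace^k B^k G \to \loopspace^k B^k H$; by \cref{mainlemma}, this in turn is equivalent to the square
\begin{equation*}
\begin{tikzcd}
\loopspace^k B^k K \arrow[r] \arrow[d] & \loopspace^k B^k G \arrow[d] \\
\unit \arrow[r] & \loopspace^k B^k H
\end{tikzcd}
\end{equation*}
being $(n-2)$-cartesian. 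I would then treat the two parts separately, each via a single operation applied to this square.

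For the looped sequence, I use that $\loopspace$ preserves pullbacks and sends $m$-connected maps to $(m-1)$-connected ones (via $\fib(\loopspace f)\simeq \loopspace\fib(f)$, combined with the fact that $\loopspace$ drops connectivity of a type by one). Applying $\loopspace$ to the square above therefore yields an $(n-3)$-cartesian square with corners $\loopspace^{k+1} B^k K$, $\loopspace^{k+1} B^k G$, $\unit$, $\loopspace^{k+1} B^k H$. These are precisely the underlying types of $\loopspace K,\loopspace G,\loopspace H$ viewed as $(k+1)$-symmetric $(n-1)$-groups, regardless of whether one models their classifying types as the connected cover $B^k K\langle k\rangle$ or by some other $k$-connected refinement of $B^kK$, since all such models agree after $k+1$ loops. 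Applying \cref{mainlemma} with $n$ replaced by $n-1$ converts $(n-3)$-cartesianness back into the desired $(n-1)$-exactness.

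For the decategorified sequence, I would set $B^k\Decat(K)\defeq\trunc{n+k-2}{B^k K}$, whose underlying type is $\loopspace^k\trunc{n+k-2}{B^kK}\simeq\trunc{n-2}{\loopspace^kB^kK}$; the latter equivalence uses iterated commutation of $\loopspace$ with truncation, valid because $B^kK$ is $(k-1)$-connected. Since the displayed square is $(n-2)$-cartesian it is a fortiori $(n-3)$-cartesian, and \cref{mainproposition} (with $n$ there replaced by $n-2$) shows that the $(n-2)$-truncation modality preserves $(n-3)$-cartesianness. The resulting truncated square has the underlying types of $\Decat(K),\Decat(G),\Decat(H)$ as its nontrivial corners, so \cref{mainlemma} with $n$ replaced by $n-1$ again delivers $(n-1)$-exactness.

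The main obstacle I anticipate is bookkeeping rather than conceptual: pinning down classifying-type models for $\loopspace K$ and $\Decat(K)$ that realize the advertised underlying types, and verifying quantitatively how looping and $(n-2)$-truncation interact with pullbacks, with connectivity of maps, and with the connectedness of $B^kK$. Once these points are in place, the proof reduces to two direct appeals to \cref{mainlemma} and one to \cref{mainproposition}.
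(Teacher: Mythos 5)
The paper states this corollary without giving a proof, so there is nothing to compare against line by line; judged on its own, your argument is correct and is essentially the evident intended derivation from the paper's results: loop the $(n-2)$-cartesian square of underlying types to get an $(n-3)$-cartesian one, and for decategorification apply \cref{mainproposition} with the $(n-2)$-truncation, then invoke \cref{mainlemma} at level $n-1$ in both cases. Two minor remarks: one could shortcut the cartesian-square detour by directly looping, respectively $(n-2)$-truncating, the $(n-2)$-connected comparison map $K\to\ker(\varphi)$ of underlying types, and the equivalence $\loopspace\trunc{m}{X}\simeq\trunc{m-1}{\loopspace X}$ you use needs no connectivity assumption on $B^kK$.
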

Here, $\Decat$ maps a $k$-symmetric $n$-group $G$, represented by the pointed
$(k-1)$-connected $(n+k-1)$-type $B^kG$, to the $k$-symmetric $(n-1)$-group $\Decat(G)$, represented by $\trunc{n+k-2}{B^kG}$~\parencite[Sec.~6]{BDR}.

\section{Discussion and related work}
\label{sec:related}

The notion of $2$-exactness of a complex of $2$-groups is by now standard when described in terms of crossed complexes or gr-stacks \parencite{Vitale2002,butterfliesI}. Our \cref{cor:loop-decat} is reminiscent of the results of \parencite{KMV} in the setting of strict groupoids.

The benefits of our synthetic development are that we automatically
get the results in the case of stacks over a Grothendieck site as well
by interpretation in the corresponding $(\infty,1)$-topos,
and that our approach covers all higher groups, not just the case of $2$-groups
as presented by crossed modules.

\section{Acknowledgements}

The authors acknowledge the support of the Centre for Advanced Study (CAS)
at the Norwegian Academy of Science and Letters
in Oslo, Norway, which funded and hosted the research project Homotopy Type Theory and Univalent Foundations during the academic year 2018/19.

The authors are also grateful to the anonymous referees for constructive and helpful comments.

\printbibliography
\end{document}